\newcommand{\nc}{\newcommand}
\nc{\dmo}{\DeclareMathOperator}
\nc{\nt}{\newtheorem}
\dmo{\Map}{Map}
\dmo{\SMod}{SMod}
\nc{\Z}{\mathbb Z}
\nc{\R}{\mathbb R}
\nc{\N}{\mathbb N}
\nc{\C}{\mathcal C}
\nc{\G}{\mathcal G}
\nc{\M}{\mathcal M}
\nc{\f}{\mathfrak f}
\nc{\E}{E} 
\nc{\GE}{E^g}
\nc{\g}{\text{g}}
\newcommand{\cp}{\mathcal P}
\nc{\cv}{\mathcal{V}}
\DeclareMathOperator{\genus}{genus}
\nc{\cantor}{\mathscr{C}}
\nc{\sm}{\setminus}
\nc{\margin}[1]{\marginpar{\scriptsize #1}}
\title{Curve graphs of surfaces with finite-invariance index 1}
\author{Justin Lanier}
\author{Marissa Loving}
\address{Justin Lanier \\ Department of Mathematics\\ University of Chicago \\ 5734 S. University Ave. \\
Chicago, IL, 60637 \\  jlanier@math.uchicago.edu}
\address{Marissa Loving \\ School of Mathematics\\ Georgia Institute of Technology \\ 686 Cherry St. \\ Atlanta, GA, 30332 \\ mloving6@gatech.edu}
\thanks{The first author acknowledges support from NSF Grants DGE-1650044 and DMS-2002187. The second author acknowledges support from NSF Grants DGE-1144245 and DMS-1902729, as well as from NSF Grants DMS 1107452, 1107263, 1107367 ``RNMS: GEometric structures And Representation varieties", which allowed the author to work on this project during the 2017-2018 Warwick EPSRC Symposium on Geometry, Topology and Dynamics in Low Dimensions.}
\begin{document}

\maketitle


\begin{abstract}
In this note we make progress toward a conjecture of Durham--Fanoni--Vlamis, showing that every infinite-type surface with finite-invariance index $1$ and no nondisplaceable compact subsurfaces fails to have a good curve graph, that is, a connected graph where vertices represent homotopy classes of essential simple closed curves and where the natural mapping class group action has infinite diameter orbits. Our arguments use tools developed by Mann--Rafi in their study of the coarse geometry of big mapping class groups.
\end{abstract}


\vspace*{0in}

\section{Introduction}

For the purposes of this note, surfaces are connected orientable 2-manifolds and curves are homotopy classes of essential simple closed curves. A surface $S$ is of finite or infinite type according to whether or not $\pi_1(S)$ is finitely generated. The mapping class group $\Map(S)$ is the group of homotopy classes of orientation-preserving homeomorphisms of $S$.

A central object in the study of finite-type surfaces and their mapping class groups is the curve complex $\C(S)$, which was introduced by Harvey \cite{Harvey}. The vertices of $\C(S)$ represent curves on $S$ and sets of vertices that have disjoint representatives bound simplices. The curve complex is flag and it is also referred to as the curve graph. Many variants of $\C(S)$ have also been considered---for example, by replacing curves with subsurfaces or by considering only separating or nonseparating curves. When $S$ is of finite type, $\C(S)$ has infinite diameter when equipped with the path metric induced by letting each edge have length one, and its geometry is Gromov hyperbolic, as shown by Masur--Minsky \cite{MM}. Further, the orbits of the natural action of $\Map(S)$ on $\C(S)$ have infinite diameter, and analyzing these orbits gives information about the Nielsen--Thurston type of a mapping class.

For any infinite-type surface $S$, it is not hard to see that $\C(S)$ has diameter 2. If we have any hope of recovering the interesting mapping class group actions found in the finite-type setting, it is necessary to build alternatives to $\C(S)$. One option is to replace curves with other objects; in this direction, Calegari and Bavard initiated the study of ray graphs \cite{Calegari, Bavard}. In this note, following Durham--Fanoni--Vlamis, we consider alternatives where vertices correspond to curves on $S$, but where perhaps only a proper subset of curves are included, and where edges need not correspond to disjointness \cite{DFV}. We will say that a graph $\Gamma$ is a curve graph for $S$ if the vertices of $\Gamma$ represent some (not necessarily proper) subset of curves on $S$ and if the natural action of $\Map(S)$ on the vertices of $\Gamma$ induces an action on $\Gamma$ by simplicial automorphisms. When a curve graph $\Gamma$ for $S$ is connected and has an infinite diameter orbit under the natural action of $\Map(S)$, we will call $\Gamma$ a good curve graph for $S$.

Durham--Fanoni--Vlamis defined an invariant of infinite-type surfaces that in most cases determines whether or not $S$ has a good curve graph \cite{DFV}. The finite-invariance index $\f(S)$ of a surface is the size of the largest finite $\Map(S)$-invariant collection of disjoint closed proper subsets of ends of $S$; see Section 2 for more details. Using this invariant, Durham--Fanoni--Vlamis characterized in most cases whether $S$ has a good curve graph.

\begin{theorem}\cite[Main Theorem]{DFV}
\label{thm:DFV}
If $\f(S) \geq 4$, then $\Map(S)$ admits an unbounded action on a graph consisting of curves. If $\f(S) = 0$, then $\Map(S)$ admits no such unbounded action.
\end{theorem}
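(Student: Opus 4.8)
The plan is to treat the two implications of Theorem~\ref{thm:DFV} separately, following Durham--Fanoni--Vlamis \cite{DFV}. For the first, assume $\f(S)\ge 4$ and fix a $\Map(S)$-invariant collection $\mathcal A=\{A_1,\dots,A_m\}$ of pairwise disjoint closed proper subsets of the space of ends of $S$ with $m\ge 4$. Since the end space is totally disconnected I would first choose a clopen partition $\{U_1,\dots,U_m\}$ of it with $A_i\subseteq U_i$, and then realize this partition by a \emph{core} $W\subseteq S$: a genus-zero compact subsurface with exactly $m$ boundary curves, the $i$-th one bounding the complementary piece whose end set is $U_i$ (the genus of $S$ can always be absorbed into these pieces). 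Because $m\ge 4$ the core is a sphere with at least four holes, so its curve graph $\C(W)$ is already nonempty, connected, of infinite diameter, and carries pseudo-Anosov mapping classes; this is exactly where the bound $4$ enters, since for $m\le 3$ the core is a sphere with at most three holes and has no essential curves at all (and, unlike the case of finite-invariance index $\le 3$, one cannot fall back on genus inside the core).

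I would then let $\Gamma$ be the graph whose vertices are the curves of $S$ that separate the members of $\mathcal A$ into two groups each of size at least $2$ --- equivalently, the curves essential in some core --- with a $\Map(S)$-invariant edge relation chosen so that $\Gamma$ is connected: disjointness alone does not suffice, since for $m=4$ two such curves of different ``partition type'' must intersect and $\Gamma$ would be disconnected (with a component for each pairing of the four sets), so instead one declares $c\sim c'$ when $c$ and $c'$ have representatives that are either disjoint or cobound a core in which they meet minimally, so that each $\C(W)$ sits inside $\Gamma$ as a subgraph. Invariance of $\mathcal A$ makes the vertex set $\Map(S)$-invariant, hence $\Map(S)$ acts on $\Gamma$ by simplicial automorphisms, and connectedness of $\Gamma$ follows from connectedness of each $\C(W)$ together with a change-of-coordinates argument linking any two cores by a chain of cores in which consecutive members share an essential curve. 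For the unbounded orbit, the key point is that no vertex of $\Gamma$ is disjoint from a fixed core $W$: a curve disjoint from $W$ lies in a single complementary piece and hence separates the $A_i$ as ``at most $1$ versus at least $m-1$,'' contradicting the ``at least $2$ versus at least $2$'' condition. Thus the subsurface projection $\pi_W$ is defined on all of $\Gamma$ and is coarsely Lipschitz there (adjacent curves have uniformly bounded intersection, hence uniformly close projections to $W$), so fixing a curve $c$ essential in $W$ and a mapping class $g$ supported on $W$ and pseudo-Anosov on $W$ gives $d_\Gamma(c,g^n c)\gtrsim d_{\C(W)}(c,g^n c)\to\infty$. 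The step I expect to be the main obstacle is the connectedness of $\Gamma$ --- choosing the edge relation correctly and carrying out the change-of-coordinates chain between distinct cores.

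For the second implication, assume $\f(S)=0$ and let $\Gamma$ be any connected curve graph for $S$; it suffices to show that every $\Map(S)$-orbit of vertices of $\Gamma$ has finite diameter, so that $\Gamma$ is not good. The hypothesis is very restrictive: the absence of any nonempty finite invariant collection of disjoint closed proper subsets of ends forces the end space of $S$ to be self-similar, so that no curve and no finite configuration of ends is distinguished by $\Map(S)$ and every complementary piece of $S\sm c$, with its accumulated genus, can be enlarged to a copy of $S$. I would then invoke the principle --- central to the work of Mann--Rafi --- that self-similarity of the end space makes $\Map(S)$ coarsely bounded in the sense of Rosendal, so that every isometric action of $\Map(S)$ on a metric space, in particular on $\Gamma$ with its path metric, has bounded orbits; alternatively, as Durham--Fanoni--Vlamis do, one argues directly that any two curves in a common $\Map(S)$-orbit are joined by a $\Gamma$-path of uniformly bounded length, built by repeatedly absorbing curves into complementary pieces that are copies of $S$. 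Either way $\Gamma$ has no infinite-diameter orbit, so $S$ has no good curve graph; here the obstacle is precisely this input --- extracting a uniform bound on orbit diameters (equivalently, coarse boundedness of $\Map(S)$) from the self-similarity of the $\Map(S)$-action on the ends.
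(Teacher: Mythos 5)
This theorem is imported from Durham--Fanoni--Vlamis and is not proved anywhere in the present paper, so there is no internal proof to compare your sketch against; I can only assess it on its own terms and against what \cite{DFV} actually does.

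For the direction $\f(S)\ge 4$, your outline is the right shape and close in spirit to \cite{DFV}: curves inducing a partition of the invariant sets into two groups of size at least two, a compact genus-zero core $W$ realizing a clopen separation of those sets, the observation that any curve disjoint from $W$ separates the invariant sets as ``at most $1$ versus the rest'' and hence is not a vertex (this is a correct witness argument and is where $m\ge 4$ genuinely enters), and unboundedness via a coarsely Lipschitz projection to $\C(W)$ together with a pseudo-Anosov supported on $W$. However, the step you yourself flag as the main obstacle is in fact the substance of the proof: you have not produced an edge relation that is simultaneously $\Map(S)$-equivariant, of uniformly bounded intersection number (needed for the projection to be coarsely Lipschitz), and connected. ``Cobound a core in which they meet minimally'' is not yet a definition, and the change-of-coordinates chain between cores is precisely the nontrivial combinatorial work. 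As written this direction is a plausible plan rather than a proof.

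For the direction $\f(S)=0$ there is a genuine gap, and it is one this very paper is organized around. You assert that $\f(S)=0$ ``forces the end space of $S$ to be self-similar'' and then invoke coarse boundedness of $\Map(S)$. But self-similarity does not follow formally from the nonexistence of finite invariant collections of end sets: as the paper's use of Proposition~4.8 of Mann--Rafi makes clear, self-similarity requires, beyond a condition on $\M(E)$, the absence of nondisplaceable compact subsurfaces, and whether that absence can fail even in the stronger-looking case $\f(S)=1$ is posed here as an open question. So your one-sentence deduction is exactly the kind of step that demands an argument; it may be provable for $\f(S)=0$ (by showing all ends are equivalent and maximal and then ruling out nondisplaceable compact subsurfaces), but you must supply it, and the CB machinery is in any case anachronistic relative to \cite{DFV}. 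Their actual argument is the direct combinatorial one via their Proposition~4.1 (reproduced here as Proposition~\ref{prop:criterionfd}): a case analysis over nonseparating curves, curves cutting off finite-type pieces, and curves cutting off infinite-type pieces, showing every curve is ``bad''; Proposition~\ref{prop:newbadcurvesfinal} in Section~4 of this paper is a template for that style of argument. Your fallback sentence gestures at this route but presupposes that complementary pieces are ``copies of $S$,'' which is the same unproved self-similarity assertion in disguise.
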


In earlier work, Aramayona--Valdez proved whether or not $\Map(S)$-invariant subgraphs of $\C(S)$ are good for certain classes of infinite-type surfaces \cite{AV}. Their results break into two cases, depending on whether $S$ has finite or infinite genus. We summarize their relevant results as follows.

\begin{theorem}\cite[Theorems~1.4 and 1.7]{AV}
\label{thm:AV}
Let $S$ be an infinite-type surface.
\begin{enumerate}
    \item If $S$ has finite genus and no isolated punctures, then a $\Map(S)$-invariant subgraph $\G(S)$ has infinite diameter if and only if it contains no separating curves that cut off a disk containing some, but not all, of the punctures.
    \item  If $S$ is a blooming Cantor tree and $\G(S)$ is a $\Map(S)$-invariant subgraph of $\C(S)$, then $diam(\G(S)) = 2$.
\end{enumerate}
\end{theorem}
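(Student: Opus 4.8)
\emph{Setup.} The hypotheses of part (1) force the space of ends of $S$ to be a Cantor set: it is second countable, compact, and totally disconnected, and since $S$ has finite genus (so no ends are accumulated by genus) and no isolated punctures, it has no isolated points. Hence $S\cong\Sigma_g\sm\cantor$ for some $0\le g<\infty$, where $\Sigma_g$ is the closed genus-$g$ surface. Because every nonempty clopen subset of $\cantor$ is again a Cantor set, a separating curve of $S$ cuts off a disk containing some but not all of the punctures --- call such a curve \emph{small} --- exactly when one of its two complementary pieces is planar. For part (2) I use the change-of-coordinates principle for infinite-type surfaces in the form: the blooming Cantor tree is self-similar enough that for any compact $K\subset S$ and any curve $\alpha$ there is a curve in the $\Map(S)$-orbit of $\alpha$ that is disjoint from $K$.

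\emph{Part (1).} For the direction ``a small curve lies in $\G(S)$ implies $\operatorname{diam}\G(S)<\infty$'': on $\Sigma_g\sm\cantor$ the small curves form a single $\Map(S)$-orbit (the clopen partition of $\cantor$ cut out by a small curve is into two Cantor sets, and the complementary pieces are a punctured disk and a once-holed genus-$g$ surface), so $\G(S)$ then contains all of them. I check (a) any two small curves are at distance at most $2$ in $\C(S)$ --- if the clopen subsets of $\cantor$ they enclose are disjoint, the curves can be realized disjointly with a genus-$g$, two-holed surface between them, and otherwise one shrinks one curve to a small curve enclosing a clopen set disjoint from the other's --- and (b) every curve $\gamma$ of $S$ is disjoint from some small curve, since $\gamma$ sits in a compact subsurface whose complement carries a proper nonempty clopen chunk of $\cantor$, in which a small sub-disk lives; as $\G(S)$ is a full subgraph this gives $\operatorname{diam}\G(S)\le 4$. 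For the converse direction, fix a once-holed genus-$g$ subsurface $T\cong\Sigma_{g,1}\subset S$ whose complement is a disk with $\cantor$ removed (so $g\ge 1$; the case $g=0$ is degenerate, every essential curve of $\Sigma_0\sm\cantor$ being small), so that $\C(T)$ has infinite diameter. A curve disjoint from $T$ lies in the complement of $T$ and so bounds a disk with punctures; it is small unless that disk contains all of $\cantor$, in which case it bounds a once-holed genus-$g$ surface and thus lies in the $\Map(S)$-orbit of $\partial T$. Hence, apart from the orbit of $\partial T$, every curve of $\G(S)$ either lies in $T$ or meets $T$ essentially, and on this part of $\G(S)$ the shadow $f(\gamma)=d_{\C(T)}(x_0,\pi_T(\gamma))$ to the subsurface projection is coarsely Lipschitz, since projections of disjoint curves are uniformly close in $\C(T)$. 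It is also unbounded: if $\G(S)$ is contained in the orbit of $\partial T$ then, as no two distinct such curves are disjoint, $\G(S)$ is an infinite edgeless graph and already has infinite diameter; otherwise take $\gamma_0\in\G(S)$ outside that orbit and a pseudo-Anosov $\varphi$ supported in $T$, so that $\varphi^n(\gamma_0)\in\G(S)$ and $f(\varphi^n(\gamma_0))\to\infty$ by equivariance of $\pi_T$ under $\varphi$ and the positive translation length of $\varphi$ on $\C(T)$. A coarsely Lipschitz unbounded function forces $\operatorname{diam}\G(S)=\infty$; the single loose end is that $f$ may fail to be Lipschitz at the vertex $\partial T$ itself, which is cleaned up using that the orbit of $\partial T$ is a discrete, pairwise-intersecting subset.

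\emph{Part (2).} Let $S$ be a blooming Cantor tree and $\G(S)$ a nonempty $\Map(S)$-invariant full subgraph of $\C(S)$. Given $\alpha,\beta\in\G(S)$, enclose $\alpha\cup\beta$ in a compact subsurface $K$; by self-similarity there is $\gamma$ in the $\Map(S)$-orbit of $\alpha$, hence in $\G(S)$, that is disjoint from $K$ and so from both $\alpha$ and $\beta$. Thus $d_{\G(S)}(\alpha,\beta)\le 2$. On the other hand any $\Map(S)$-orbit contains two intersecting curves --- apply a sufficiently large pseudo-Anosov supported on a finite-type subsurface filling a neighborhood of a curve --- so $\G(S)$ is not a complete graph, and therefore $\operatorname{diam}\G(S)=2$.

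\emph{Expected main obstacle.} The real work is the converse direction of part (1): pinning down exactly which curves can avoid a fixed genus-carrying subsurface $T$ (precisely the small curves and the curves bounding a once-holed genus-$g$ surface), and then running the Lipschitz and unboundedness estimates for the subsurface-projection shadow while disposing of the boundary-parallel genus-bounding curves. The remaining ingredients --- the change-of-coordinates statements, the distance-$\le 2$ bounds for small curves, and the self-similarity of the blooming Cantor tree --- are routine, though the bookkeeping with clopen subsets of the Cantor set is fiddly.
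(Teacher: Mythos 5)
First, a point of order: the paper does not prove this statement at all --- it is quoted from Aramayona--Valdez \cite{AV} (with a corrected hypothesis in part (2)), so there is no in-paper proof to compare against. Judged on its own merits, your part (2) and the ``small curve implies finite diameter'' direction of part (1) are essentially sound (modulo one false sub-claim in step (a): two small curves enclosing disjoint clopen subsets of $\cantor$ need \emph{not} be realizable disjointly --- one can be a large twist of the other --- though the conclusion $d_{\C(S)}\le 2$ still holds via the filling-subsurface argument you already use in step (b), which in fact gives diameter $\le 2$ directly).

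The genuine gap is in the converse direction of part (1), and it is exactly the ``single loose end'' you wave at. Your subsurface $T\cong\Sigma_{g,1}$ is not a witness for $\G(S)$: the curve $\partial T$ is essential, not small, and disjoint from $T$, so nothing prevents $\partial T$ (and hence its whole orbit) from lying in $\G(S)$. In that case the shadow $f(\gamma)=d_{\C(T)}(x_0,\pi_T(\gamma))$ is not coarsely Lipschitz in any repairable sense: every curve contained in $T$ is adjacent to the vertex $\partial T$ in $\G(S)$, so the set of such curves has diameter $2$ in $\G(S)$, while $f$ is unbounded on it (apply powers of your pseudo-Anosov supported in $T$). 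So the Lipschitz-plus-unbounded argument collapses precisely in the case it is supposed to handle, and ``the orbit of $\partial T$ is a discrete, pairwise-intersecting subset'' does not rescue it, since rerouting a path around a vertex of an independent set carries no a priori length bound. The fix is to change the subsurface: take $T''\cong\Sigma_{g,2}$ with each boundary component cutting off a disk containing a proper nonempty clopen chunk of $\cantor$. Then every curve disjoint from $T''$ (including the components of $\partial T''$ themselves) is small, so under the hypothesis that $\G(S)$ contains no small curves, every vertex of $\G(S)$ has nonempty projection to $T''$; the map $\gamma\mapsto d_{\C(T'')}(x_0,\pi_{T''}(\gamma))$ is then genuinely coarsely Lipschitz on $\G(S)$ and unbounded along a pseudo-Anosov orbit, yielding infinite diameter. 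This witness-subsurface choice is the one missing idea; the rest of your outline is the standard route.
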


The statement given in (2) is a correction of the statement given by Aramayona--Valdez \cite[Theorem~1.7]{AV}; their original statement contradicts Theorem~\ref{thm:DFV}. The proof that they give assumes not only that $S$ has infinite genus, but that every end is accumulated by genus \cite{JaviEmail}; this further hypothesis implies that $S$ is a blooming Cantor tree. The surfaces treated in Theorem~\ref{thm:AV} are also treated in Theorem~\ref{thm:DFV}, since they all have finite-invariance index either 0 or $\infty$.

In their paper, Durham--Fanoni--Vlamis make some observations about the cases of finite-invariance index $1$, $2$, and $3$. For $\f(S) = 2$ and $\f(S) = 3$, they give examples showing that finite-invariance index is too coarse an invariant to determine whether $S$ has a good curve graph. On the other hand, they conjecture that $\f(S)=1$ implies that $S$ has no good curve graph \cite[Conjecture 9.1]{DFV}.

The main result of this note confirms their conjecture under the assumption $S$ has no nondisplaceable compact subsurfaces.

\begin{theorem}
\label{thm:f1}
Let $S$ be a surface of infinite type with $\f(S)=1$ such that $S$ has no nondisplaceable compact subsurfaces. Then $S$ does not have a good curve graph.
\end{theorem}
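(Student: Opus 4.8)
The plan is to reduce the statement to a coarse-geometric fact about $\Map(S)$ and then to verify the relevant hypothesis using the structure of the end space. First I would observe that a connected curve graph $\Gamma$ for $S$ is a geodesic metric space on which $\Map(S)$ acts continuously by isometries --- continuity because the stabilizer of a vertex of $\Gamma$ is open, containing the stabilizer of the corresponding curve. Consequently, if $\Map(S)$ is coarsely bounded as a topological group, then every vertex orbit in $\Gamma$ has finite diameter, so $\Gamma$ fails to be good. It therefore suffices to prove that $\f(S)=1$ together with the absence of nondisplaceable compact subsurfaces forces $\Map(S)$ to be coarsely bounded; and by the work of Mann--Rafi, $\Map(S)$ is coarsely bounded whenever $S$ is self-similar. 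So the real content is to show that our hypotheses imply $S$ is self-similar.

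To establish self-similarity I would analyze $E(S)$ through the preorder on ends used by Mann--Rafi, in which $x \preceq y$ when every neighborhood of $y$ contains a homeomorphic copy of a neighborhood of $x$, respecting the set $E^g$ of ends accumulated by genus. The condition $\f(S)=1$ should pin this structure down tightly. First, there can be only one $\preceq$-maximal equivalence class: if there were two $\preceq$-incomparable maximal classes, the closures of their down-sets would be disjoint, $\Map(S)$-invariant, closed, proper subsets of $E(S)$, giving $\f(S)\geq 2$. Second, that maximal class must be stable --- its members having neighborhood bases of clopen sets homeomorphic to $(E(S),E^g)$ --- since instability again lets one peel off a second disjoint invariant closed proper subset of ends, contradicting $\f(S)=1$. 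Meanwhile the no-nondisplaceable-compact-subsurface hypothesis forces $\genus(S)\in\{0,\infty\}$, because a finite positive amount of genus is confined to a compact subsurface that cannot be pushed off itself; combined with $\f(S)=1$ this controls how $E^g$ sits inside $E(S)$ so that the self-similarity can be arranged to respect genus. Putting these together, every clopen partition of $(E(S),E^g)$ has a piece containing a clopen copy of the whole, and a standard change-of-coordinates argument promotes this to self-similarity of $S$ as a surface: every decomposition of $S$ into finitely many subsurfaces with compact boundary has a piece containing a homeomorphic copy of $S$. Mann--Rafi then give that $\Map(S)$ is coarsely bounded, which completes the argument.

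The hard part will be the passage from $\f(S)=1$ to ``unique and stable $\preceq$-maximal class'': one must show that any failure of this condition produces \emph{two} disjoint $\Map(S)$-invariant closed proper subsets of ends, not merely one, and must do so uniformly across the various shapes $E(S)$ can take --- in particular when $S$ has (necessarily infinitely many) isolated planar ends, or has infinite genus with $E^g$ a proper subset of $E(S)$. A secondary subtlety is making the self-similarity respect $E^g$ in the positive-genus case, which is exactly where the no-nondisplaceable-compact-subsurface hypothesis is needed. Should $\f(S)=1$ fail to yield full self-similarity in some configuration, the fallback is to argue boundedness of orbits in $\Gamma$ directly, in the style of Durham--Fanoni--Vlamis's treatment of $\f(S)=0$: using a shift homeomorphism that pushes compact sets toward the distinguished invariant closed set of ends, together with displaceability of the compact subsurface filled by a given curve, to bound the distance in $\Gamma$ between a vertex and its image under an arbitrary mapping class.
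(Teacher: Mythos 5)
Your overall architecture coincides with the paper's first proof: a good curve graph yields an unbounded continuous length function $f \mapsto d(f(c),c)$ on $\Map(S)$, so $\Map(S)$ is not CB, while self-similarity of the end space forces $\Map(S)$ to be CB by Mann--Rafi's Proposition 3.1. That reduction is sound. The gap is in how you propose to establish self-similarity. A first, fixable, slip: the down-sets of two distinct maximal classes need not be disjoint --- a non-maximal end can lie below both --- so the closures of the down-sets do not furnish two disjoint $\Map(S)$-invariant closed proper subsets. The paper's Lemma~\ref{lem:point-Cantor} instead uses the maximal equivalence classes themselves, which are disjoint by definition and closed (and, in the case of a single finite maximal class with more than one point, the individual points of that class), concluding that $\M(E)$ is a single class that is either a point or a Cantor set.

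The serious problem is your second step: the claim that $\f(S)=1$ forces the maximal class to be ``stable,'' on the grounds that instability would let you peel off a second disjoint invariant closed proper subset of ends. That assertion amounts to saying that $\f(S)=1$ by itself implies self-similarity of $E(S)$, which is precisely the open question recorded in the paper (Question~1.4): the authors state that an earlier version claimed such a proof and that Malestein and Tao found an error in it, and the hypothesis that $S$ has no nondisplaceable compact subsurfaces was added to the theorem exactly to bypass this point. The correct route is to combine the conclusion of Lemma~\ref{lem:point-Cantor} with the no-nondisplaceable-compact-subsurface hypothesis and invoke Mann--Rafi's Proposition~4.8, which converts the pair of conditions into self-similarity of $(E,E^g)$. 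You do invoke that hypothesis, but only to make the self-similarity ``respect $E^g$,'' which misplaces where it is genuinely needed; as written, your argument for stability would prove more than is known to be true. Your fallback of arguing orbit-boundedness directly in the style of Durham--Fanoni--Vlamis is essentially the paper's second proof (Proposition~\ref{prop:newbadcurvesfinal}), but it too takes self-similarity as an input, so it does not rescue the missing step.
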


We give two proofs of this theorem. After reviewing some background in Section 2, we begin Section 3 by proving a preliminary lemma that squares up definitions introduced by Durham--Fanoni--Vlamis and Mann--Rafi. Theorem~\ref{thm:f1} then follows directly from a strictly stronger result of Mann--Rafi \cite[Proposition~3.1]{MR}. Our second proof of Theorem~\ref{thm:f1} is given in Section 4. This proof uses the same preliminary lemma but then follows the style of the arguments of Durham--Fanoni--Vlamis.
 
It is an open question whether a surface $S$ with $\f(S)=1$ can have a nondisplaceable compact subsurface. In a previous version of this paper, we claimed a proof that they cannot; Justin Malestein and Jing Tao pointed out an error in our proof. We record this question here, phrased in light of our Lemma~\ref{lem:point-Cantor}:

\begin{question}
Can an infinite-type surface $S$ with either zero or infinite genus have a nondisplaceable compact subsurface when $\M(E)$ consists of a unique equivalence class that is either a point or a Cantor set?
\end{question}

\noindent By Proposition~4.8 of Mann--Rafi, an equivalent formulation asks whether the end space of an infinite-type surface $S$ with either zero or infinite genus can fail to be self-similar if $\M(E)$ consists of a unique equivalence class that is either a point or a Cantor set.

\subsection*{Acknowledgements} The authors thank Javier Aramayona, Federica Fanoni, Justin~Malestein, Katie Mann, Hugo Parlier, Jing Tao, Ferr\'{a}n Valdez, and Nick Vlamis for a number of helpful conversations and correspondences. They thank Justin Malestein and Jing Tao for identifying an error in a proof in an earlier version of this paper.

\section{Background}

The goal of this section is to overview the aspects of infinite-type surfaces relevant to our work. We review the classification of infinite-type surfaces as well as notions related to the finite-invariance index, the Mann--Rafi partial order, self-similarity, and coarse boundedness of a group. For more comprehensive treatments of these topics, we refer the reader to \cite{overview}, \cite{Richards}, \cite{DFV}, and \cite{MR}.

\subsection*{Classifying infinite-type surfaces}The classification of infinite-type surfaces was first given by Ker\'{e}kj\'{a}rt\'{o} and was clarified and extended by Richards \cite{Ker,Richards}.

The classification implies that every connected orientable 2-manifold may be constructed as follows. Begin with a sphere. Puncture it, without loss of generality, along some closed subset of a Cantor set. This set of punctures is denoted $\E(S)$, and punctures are called ends. Finally, add handles to the surface so that the only accumulation points of sequences of handles are in $\E(S)$. This marks some closed subset of $\E(S)$ as being accumulated by genus. This space of genus ends is denoted $\GE(S)$ and is recorded as a subspace of $\E(S)$. Note that $\GE(S)$ is also a closed subset of a Cantor set. Observe that closed surfaces and punctured surfaces of finite type are special cases of this construction.

Let $\g(S) \in \N \cup \{\infty\}$ equal the genus of $S$. The classification states that the triple $(\g(S),\E(S), \GE(S))$ uniquely determines $S$ up to homeomorphism. For more details on the classification and on the definition of the spaces of ends, see \cite{DFV}.

\subsection*{The finite-invariance index} Following Durham--Fanoni--Vlamis, we make the following definitions.
We say that a collection $\cp$ of disjoint subsets of the space of ends is $\Map(S)$-invariant if for every $P\in\cp$ and for every $\varphi\in\Map(S)$ there exists $Q\in\cp$ such that $\varphi(P)=Q$. The finite-invariance index of $S$, denoted $\f(S)$, is defined as follows:

\begin{itemize}
\item $\f(S)\geq n$ if there is a $\Map(S)$-invariant collection $\mathcal{P}$ of disjoint closed proper subsets of $E(S)$ satisfying $|\mathcal{P}| = n$;
\item $\f(S)=\infty$ if $\g(S)$ is finite and positive; 
\item $\f(S)=0$ otherwise. 
\end{itemize}

\noindent We say that $\f(S)=n$ if $\f(S)\geq n$ but $\f(S)\ngeq n+1$.

For any $\Map(S)$-invariant collection $\mathcal{P}$ of disjoint closed proper subsets of $E(S)$, call the elements of $\mathcal{P}$ finite-invariance sets. When $\mathcal{P}$ contains only one set, we also call $\mathcal{P}$ a finite-invariance set.

\subsection*{A partial order and self-similarity} Following Mann--Rafi, we make the following definitions. For $x,y \in E(S)$, we say $x \preccurlyeq y$ if every clopen neighborhood of $y$ contains a homeomorphic copy of some clopen neighborhood of $x$. We say $x$ and $y$ are equivalent if $x \preccurlyeq y$ and $y \preccurlyeq x$. We will make use of the following result concerning this partial order on equivalence classes of ends.

\begin{proposition}[\cite{MR}, Proposition 4.7] \label{prop:maximal}
The partial order $\preccurlyeq$ has maximal elements. Furthermore, the equivalence class of every maximal element is either finite or a Cantor set.
\end{proposition}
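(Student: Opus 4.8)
The plan is to deduce both assertions from one structural observation about the space of ends $E(S)$, which (being a closed subset of a Cantor set) is a nonempty compact metrizable totally disconnected space, so it has a basis of clopen sets; throughout, ``copy'' means ``clopen subspace homeomorphic to'', and I write $x\sim y$ when $x\preccurlyeq y\preccurlyeq x$. The observation --- call it the germ lemma --- is that if $\phi$ is a homeomorphism from a clopen set $N\subseteq E(S)$ onto a clopen subset $\phi(N)\subseteq E(S)$, then $q\sim\phi(q)$ for every $q\in N$. This holds because the clopen neighborhoods of $q$ contained in $N$ form a neighborhood basis of $q$ and are carried homeomorphically by $\phi$ to a neighborhood basis of $\phi(q)$; since $\preccurlyeq$ is determined by arbitrarily small clopen neighborhoods, one checks directly that $q\preccurlyeq\phi(q)$ and $\phi(q)\preccurlyeq q$. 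The consequence I will use is that $\phi$ carries $N\cap[y]$ bijectively onto $\phi(N)\cap[y]$ for every equivalence class $[y]$.

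For the existence of maximal elements I would apply Zorn's lemma to the partial order that $\preccurlyeq$ induces on equivalence classes of ends (that this is a genuine partial order uses only transitivity of $\preccurlyeq$, an easy check). The key point is that for each end $z$ the set $A_z:=\{y\in E(S):z\preccurlyeq y\}$ is closed: if $y_n\to y$ with $z\preccurlyeq y_n$, then any clopen neighborhood $W$ of $y$ contains some $y_n$, hence is a clopen neighborhood of $y_n$, hence contains a copy of some clopen neighborhood of $z$, so $z\preccurlyeq y$. Given a chain $\{[z_\alpha]\}$, the sets $\{A_{z_\alpha}\}$ form a chain of nonempty closed subsets of the compact space $E(S)$ under reverse inclusion, so $\bigcap_\alpha A_{z_\alpha}\neq\emptyset$ by compactness; any end in this intersection dominates every $z_\alpha$, furnishing an upper bound. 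Zorn then yields a maximal equivalence class, i.e.\ a maximal end.

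Now let $m$ be a maximal end. First $[m]$ is closed: if $y$ is an accumulation point of $[m]$, some clopen neighborhood of $y$ meets $[m]$, so by the argument above $m\preccurlyeq y$, and maximality forces $y\preccurlyeq m$, hence $y\in[m]$. Next, assuming $[m]$ is infinite, I claim it has no isolated point, so that it is perfect. Being closed in the compact space $E(S)$, $[m]$ has an accumulation point $p\in[m]$, so every clopen neighborhood of $p$ meets $[m]$ in an infinite set. If some $x\in[m]$ were isolated in $[m]$, choose a clopen $U\ni x$ with $U\cap[m]=\{x\}$; since $p\sim x$ we have $p\preccurlyeq x$, so $U$ contains a copy $\psi(M)$ of some clopen neighborhood $M$ of $p$, and the germ lemma gives $\psi(M)\cap[m]=\psi(M\cap[m])$, which is infinite --- contradicting $U\cap[m]=\{x\}$. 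Thus $[m]$ is closed and perfect, hence, being also nonempty, totally disconnected, compact and metrizable, it is either finite or homeomorphic to a Cantor set.

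I expect the delicate step to be the structure of $[m]$: the contradiction is obtained by pushing the \emph{complicated} end $p$ \emph{into} the neighborhood of the \emph{isolated} end $x$ via $p\preccurlyeq x$ (the other direction is useless here), and it is essential that copies be clopen subspaces so that the germ lemma transports membership in $[m]$ correctly --- pinning down the reading of ``contains a homeomorphic copy'' is the one point that must be handled with care. The existence half, by contrast, is a soft compactness argument once one notices that the up-sets $A_z$ are closed.
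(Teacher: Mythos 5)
This statement is quoted from Mann--Rafi and the paper gives no proof of it, so there is nothing internal to compare against; your argument is essentially a reconstruction of the proof in \cite[Proposition~4.7]{MR}, and it is correct: Zorn's lemma applied via compactness of the closed up-sets $A_z$ for existence, and ``closed plus perfect'' plus Brouwer's characterization of the Cantor set for the structure of a maximal class. On the one point you flag as delicate: even if one reads ``homeomorphic copy'' as the image of an embedding $\psi$ of a clopen set that need not itself be open, your argument survives, because the only direction of the germ lemma you actually need is $q\preccurlyeq\psi(q)$ (clopen neighborhoods of $q$ push forward into any neighborhood of $\psi(q)$ via $\psi$, which requires nothing of the image), and then maximality of $[m]$ upgrades $m\preccurlyeq\psi(q)$ to $\psi(q)\in[m]$; together with injectivity of $\psi$ this already makes $U\cap[m]$ infinite and yields the contradiction. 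So the proof is sound under either reading of the definition.
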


\noindent We let $\mathcal M(E)$ denote the set of equivalence classes of maximal ends of $E(S)$.

Continuing to follow Mann--Rafi, a pair $(E,E^g)$ is self-similar if for any decomposition $E=E_1 \sqcup E_2 \sqcup \dots \sqcup E_n$ of $E$ into pairwise disjoint clopen sets, there exists a clopen set $D$ contained in some $E_i$ such that the pair $(D, D \cap E^g)$ is homeomorphic to $(E, E^g)$.

A Polish group $G$ is (globally) coarsely bounded, or CB, if every compatible left-invariant metric on $G$ gives $G$ finite diameter. Mann--Rafi proved the following sufficient condition for $\Map(S)$ to be CB.

\begin{proposition}[\cite{MR}, Proposition 3.1] \label{prop:self-similar}
Let $S$ be a surface of infinite or zero genus. If the space of ends of $S$ is self-similar, then $\Map(S)$ is CB. 
\end{proposition}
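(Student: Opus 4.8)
The plan is to verify Rosendal's algebraic criterion for coarse boundedness: a Polish group $G$ is CB if and only if for every open neighborhood $U$ of the identity there are a finite set $F \subseteq G$ and an integer $k$ with $G = (FU)^k$. Since a neighborhood basis of the identity in $\Map(S)$ is given by the open subgroups $U_K$ of mapping classes restricting to the identity on $K$ (equivalently, supported on $S \sm K$) as $K$ ranges over compact subsurfaces, it suffices to establish $G = (FU_K)^k$ for each such $K$.

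First I would convert self-similarity into a geometric statement via the classification of surfaces. Enlarging $K$, I may assume $\partial K$ is a multicurve, so the ends of $S$ split into the clopen sets supported by the components of $S \sm K$. Applying self-similarity of $(\E(S), \GE(S))$ to this clopen partition produces a clopen set $D$, contained in the ends of a single complementary component of $S \sm K$, with $(D, D \cap \GE(S)) \cong (\E(S), \GE(S))$. Because $\g(S) \in \{0, \infty\}$, the subsurface $W$ supported by $D$, with genus accumulating exactly on $D \cap \GE(S)$, has the same invariants as $S$, so $W \cong S$ by the classification, with $W \cap K = \emptyset$ and $\partial W$ a single separating curve. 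Iterating inside $W$ yields an infinite nested family of copies of $S$ whose supports converge to the ends, together with a shift homeomorphism $h \in \Map(W) \subseteq U_K$. This nesting is the engine that distinguishes the self-similar case from the finite-type case.

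The heart of the argument is then an absorption step expressing an arbitrary $g \in \Map(S)$ as a product of boundedly many elements of $FU_K$. The mechanism is to use a fixed finite set $F$ of relocation classes together with the self-similar copy $W$ to displace the essential part of $g$ off $K$ and into $W$, where the infinite nesting lets it be cancelled against the shift $h$ and elements of $\Map(W) \subseteq U_K$, after which the relocation factors are reabsorbed into $(FU_K)^k$. The model to keep in mind is $\mathrm{Homeo}(\cantor)$, where self-similarity makes the relative position of two finite clopen partitions into finitely much combinatorial data and hence forces finitely many double cosets $U \backslash G / U$; the surface case is the same idea implemented geometrically.

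The main obstacle is obtaining a bound $k$ that is uniform over all $g$. Unlike the Cantor-set model, two compact subsurfaces $K$ and $g(K)$ can have arbitrarily complicated relative position---for finite-type surfaces this is precisely why $\Map(S)$ fails to be CB---so one cannot merely count double cosets, and must instead show that self-similarity lets any such configuration be straightened using only boundedly many copies of $S$ worth of moves. Two further points require care, and both are supplied by self-similarity: that the infinite products appearing in the displacement are genuine elements of $\Map(S)$, which holds because their supports converge to the end space, and that the reduction to a single neighborhood $U_K$ handles the genus-$0$ and genus-$\infty$ cases uniformly.
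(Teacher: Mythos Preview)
The paper does not give its own proof of this statement; it is quoted from Mann--Rafi as a black box (note the attribution to \cite[Proposition~3.1]{MR} in the Background section) and then invoked in both proofs of Theorem~\ref{thm:f1}. So there is no argument in the paper against which to compare yours.

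Judged on its own, your outline points in the right direction but stops short of a proof precisely where you yourself flag the difficulty. You correctly set up Rosendal's criterion, correctly extract from self-similarity a one-bounded copy $W$ of $S$ inside $S\setminus K$, and correctly observe that the $\mathrm{Homeo}(\cantor)$ finite-double-coset argument does not transfer, because $K$ and $g(K)$ can sit in arbitrarily complicated relative position (e.g.\ powers of a Dehn twist about a curve crossing $\partial K$ already give infinitely many $U_K$-double cosets). But the sentence ``one \dots\ must instead show that self-similarity lets any such configuration be straightened using only boundedly many copies of $S$ worth of moves'' is a restatement of what has to be proved, not a proof. Concretely: you have not specified the finite set $F$, you have not specified $k$, and you have not explained why ``displacing the essential part of $g$ off $K$ and into $W$'' can be achieved by a product of elements of $FU_K$ whose length is bounded independently of $g$. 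The shift $h$ and the nested copies you build live in $U_K$ and are useful once $g$ has already been pushed into $W$, but the displacement step itself is the whole difficulty, and nothing in your sketch furnishes it. This is exactly the content supplied by Mann--Rafi's argument, where self-similarity is applied a second time---now to the clopen partition of $E(S)$ determined by $g$---to produce an element of $U_K$ realigning the configuration with a fixed model; your proposal gestures at this (``boundedly many copies of $S$ worth of moves'') without carrying it out.
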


\noindent A helpful characterization of CB groups, which we will utilize in Section~\ref{section:main-thm-first}, relies on the notion of a length function. A length function on a topological group $G$ is a continuous function $\ell: G \to [0, \infty)$ such that $\ell(g) = \ell(g^{-1})$, $\ell(id) = 0$, and $\ell (gh) \leq \ell(g) + \ell(h)$ for all $g, h \in G.$ It follows from work of Rosendal \cite[Theorem 10]{Rosendal} that a topological group $G$ which admits an unbounded length function $\ell$ is not CB.

\subsection*{Good curve graphs vs. coarse boundedness}

If a surface $S$ has a good curve graph, then $\Map(S)$ is not CB. However, the converse is not true. There are surfaces whose mapping class groups are not CB, and so admit an action with an unbounded orbit on some metric space, yet do not admit such an action in a natural way on any metric space arising as a connected curve graph for the surface.

Examples of such surfaces include the plane minus a Cantor set and the tripod surface, which has exactly three ends, all accumulated by genus. These surfaces have finite-invariance index 2 and 3, respectively. Since these surfaces have nondisplaceable subsurfaces, their mapping class groups are not CB by the work of Mann--Rafi \cite[Theorem~1.9]{MR}. However, Durham--Fanoni-Vlamis showed that neither surface has a good curve graph \cite[Proposition~9.2]{DFV}.

In cases where such a ``gap" exists, we can hope to find a substitute for graphs of curves by constructing graphs of other objects naturally associated to a surface. For instance, in the case of the plane minus a Cantor set, Bavard showed that the ray graph is connected and  $\delta$-hyperbolic and has infinite diameter orbits under the natural action of the mapping class group \cite{Bavard}. In the case of the tripod, it follows from a recent result of Fanoni--Ghaswala--McLeay that its mapping class group acts with infinite diameter orbits on its omnipresent arc graph; this is a connected, $\delta$-hyperbolic graph where vertices correspond to certain (possibly bi-infinite) arcs between distinct ends of the surface and where edges correspond to disjointness \cite[Theorem~C]{GFM2021}. 

\section{Main theorem: first approach}
\label{section:main-thm-first}

In this section we begin by relating the finite-invariance index and the partial order on equivalence classes of ends in the case $\f(S) = 1$. We then show that if $S$ has a good curve graph, then $\Map(S)$ has an unbounded length function. We close by giving our first proof of Theorem~\ref{thm:f1}.

\begin{lemma} \label{lem:point-Cantor}
Let $S$ be a surface of infinite type with $\f(S)=1$. Then $\mathcal{M}(E)$ consists of a unique equivalence class that is either a point or a Cantor set.
\end{lemma}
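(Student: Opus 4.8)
The plan is to argue by contradiction. Every mapping class acts on $E(S)$ by a homeomorphism and hence preserves the topologically defined order $\preccurlyeq$, so $\Map(S)$ permutes the set $\mathcal{M}(E)$ of maximal equivalence classes and also permutes the individual maximal ends. By Proposition~\ref{prop:maximal} each maximal class is finite or a Cantor set, so in either case it is a compact, hence closed, subset of the metrizable space $E(S)$; moreover each maximal class is nonempty, and when $\mathcal{M}(E)$ has at least two elements every maximal class is proper. Suppose the conclusion fails. By Proposition~\ref{prop:maximal} this means either (i) $\mathcal{M}(E)$ consists of a single finite class $[\mu] = \{\mu_1, \dots, \mu_k\}$ with $k \geq 2$, or (ii) $\mathcal{M}(E)$ contains at least two classes. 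In each case the goal is to produce a $\Map(S)$-invariant collection of two or more pairwise disjoint nonempty closed proper subsets of $E(S)$, which forces $\f(S) \geq 2$ and contradicts the hypothesis.

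Case (i) is immediate: since $\Map(S)$ permutes the maximal ends it permutes the singletons $\{\mu_1\}, \dots, \{\mu_k\}$, so $\{\{\mu_1\}, \dots, \{\mu_k\}\}$ is such a collection of size $k \geq 2$. In case (ii), I would first treat the subcase that $\mathcal{M}(E)$ is finite, say $\mathcal{M}(E) = \{[x_1], \dots, [x_m]\}$ with $m \geq 2$: then $\{[x_1], \dots, [x_m]\}$ is itself a $\Map(S)$-invariant collection of the required type, of size $m \geq 2$, so again $\f(S) \geq 2$.

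The main obstacle is the remaining subcase of (ii): that $\mathcal{M}(E)$ is infinite. Here I would argue by compactness. Pick representatives $y_i$ of infinitely many distinct maximal classes; since $E(S)$ is compact and metrizable, after passing to a subsequence we may assume $y_i \to p$ for some end $p$ with $y_i \neq p$. If one can show that $y_i \preccurlyeq p$ for infinitely many $i$, then maximality of $y_i$ gives $p \preccurlyeq y_i$ as well, so $p$ is equivalent to $y_i$ and hence $p \in [y_i]$ for infinitely many $i$ --- contradicting that the classes $[y_i]$ are distinct. So everything reduces to proving $y_i \preccurlyeq p$ for all large $i$. For clopen neighborhoods of $p$ that contain $y_i$ this is automatic; the delicate point is the arbitrarily small clopen neighborhoods $V$ of $p$ with $y_i \notin V$, where one must find a homeomorphic copy of some clopen neighborhood of $y_i$ inside $V$. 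I expect this to be the technical heart of the proof: it appears to require controlling the local homeomorphism types of maximal ends --- roughly, that these types are linearly ordered by clopen embeddability (so that the type realized near $p$ dominates the type of every $y_i$, two maximal ends with the same local type being already equivalent), which then rules out infinitely many distinct maximal classes. Once that subcase is excluded, the case analysis above completes the proof.
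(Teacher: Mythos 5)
Your case~(i) and the finite subcase of~(ii) are correct and agree with the paper, but the infinite subcase of~(ii) is a genuine gap: you only sketch it, and the sketch rests on a false premise. You hope that the local types of maximal ends ``are linearly ordered by clopen embeddability,'' but two \emph{distinct} maximal classes are automatically incomparable (if $x \preccurlyeq y$ with $x$ maximal, then $y \preccurlyeq x$, so $x \sim y$), and surfaces with several maximal classes do exist --- e.g.\ the plane minus a Cantor set, whose isolated planar end and whose Cantor set of ends form two incomparable maximal classes. So no linear-ordering argument can rule out infinitely many maximal classes, and your compactness reduction (showing $y_i \preccurlyeq p$ for infinitely many $i$) is left unestablished.

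The idea you are missing --- and the reason the paper's proof needs no case analysis on the size of $\mathcal{M}(E)$ at all --- is that each equivalence class is \emph{individually} $\Map(S)$-invariant: for any homeomorphism $\varphi$ of the pair $(E, E^g)$ and any end $x$, one has $\varphi(x) \sim x$, since $\varphi$ carries clopen neighborhoods of $x$ homeomorphically onto clopen neighborhoods of $\varphi(x)$ (giving $x \preccurlyeq \varphi(x)$, and symmetrically $\varphi(x) \preccurlyeq x$). Hence $\varphi([x]) = [x]$ for every class, and the permutation action on classes that you are guarding against is trivial. If $\mathcal{M}(E)$ contained two distinct classes $[x] \neq [y]$, then $\{[x],[y]\}$ would already be a $\Map(S)$-invariant collection of two disjoint, nonempty, closed (each class is finite or a Cantor set, hence compact), proper subsets of $E(S)$, forcing $\f(S) \geq 2$ --- regardless of whether $\mathcal{M}(E)$ is finite or infinite. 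The paper then concludes exactly as in your case~(i): the unique class, being finite or a Cantor set by Proposition~\ref{prop:maximal}, must be a single point if finite, since otherwise its singletons form an invariant collection of size at least $2$. The subcase you identify as ``the technical heart'' never needs to be confronted.
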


\begin{proof}
Let $S$ be a surface with $\f(S)=1$ and with end space $E$. It follows that $\mathcal M(E)$ consists of a unique equivalence class, for otherwise two of the equivalence classes could be taken as a collection of finite-invariance sets for $S$, as these would be closed, $\Map(S)$-invariant,  proper, and disjoint, contradicting $\f(S)=1$.

By Proposition \ref{prop:maximal}, an individual maximal equivalence class is either finite or a Cantor set. If $\mathcal M(E)$ is finite, it must consist of a single point. Otherwise each of the points could be taken as a finite-invariance set for $S$, as these would be closed, $\Map(S)$-invariant,  proper, and disjoint, contradicting $\f(S)=1$. Therefore $\mathcal{M}(E)$ is either a point or a Cantor set.
\end{proof}

\begin{proposition}
\label{prop:lengthfunction}
If $S$ is an infinite-type surface with a good curve graph $\Gamma$, then $\Map(S)$ has an unbounded length function.
\end{proposition}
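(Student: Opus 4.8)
The plan is to manufacture an unbounded length function on $\Map(S)$ directly from its action on $\Gamma$ by simplicial automorphisms. Since $\Gamma$ is a good curve graph it is connected and the natural $\Map(S)$-action has an orbit of infinite diameter; I would fix once and for all a vertex $v_0$ of $\Gamma$ lying in such an orbit. Equip $\Gamma$ with the path metric $d_\Gamma$ in which every edge has length one; because $\Gamma$ is connected, $d_\Gamma$ takes finite values. I then set
\[
\ell \colon \Map(S) \longrightarrow [0,\infty), \qquad \ell(g) = d_\Gamma(v_0,\, g\cdot v_0),
\]
the displacement function of the orbit map based at $v_0$.

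First I would verify the three length-function axioms. All three are formal consequences of the fact that each $g \in \Map(S)$ acts on $\Gamma$ by a simplicial automorphism, hence by an isometry of $d_\Gamma$: the identity acts trivially so $\ell(\mathrm{id}) = 0$; since $d_\Gamma(v_0, g^{-1}\cdot v_0) = d_\Gamma(g\cdot v_0, v_0)$ we get $\ell(g^{-1}) = \ell(g)$; and subadditivity is the triangle inequality combined with $d_\Gamma(g\cdot v_0, gh\cdot v_0) = d_\Gamma(v_0, h\cdot v_0)$. Unboundedness is just as quick: for all $g,h \in \Map(S)$ one has $d_\Gamma(g\cdot v_0, h\cdot v_0) = \ell(g^{-1}h)$, so the supremum of $\ell$ equals the diameter of the orbit $\Map(S)\cdot v_0$, which is infinite by the choice of $v_0$.

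The only point that requires care is the continuity of $\ell$. Here I would use that $\ell$ is constant on every left coset of the vertex stabilizer $\operatorname{Stab}_{\Map(S)}(v_0)$: if $h \cdot v_0 = v_0$ then $gh \cdot v_0 = g \cdot v_0$, so $\ell(gh) = \ell(g)$. Hence $\ell$ is locally constant, in particular continuous, provided that $\operatorname{Stab}_{\Map(S)}(v_0)$ is an open subgroup of $\Map(S)$. This is the standard fact that the stabilizer in $\Map(S)$ of (the isotopy class of) a simple closed curve is open---equivalently, that the $\Map(S)$-action on any curve graph with discrete vertex set is continuous---and I would invoke it with a reference to the descriptions of the topology on $\Map(S)$ in the standard references (e.g.\ \cite{overview}, \cite{MR}). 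This openness is the sole non-formal ingredient in the argument, and it is the step I would single out as the crux.
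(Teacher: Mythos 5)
Your proposal is correct and follows essentially the same route as the paper: the same displacement function $\ell(g)=d_\Gamma(v_0,g\cdot v_0)$, the same formal verification of the axioms, and the same continuity argument (the paper phrases it as openness of the sets $\{h : h(c)=g(c)\}$ in the compact-open topology, which is exactly the openness of the cosets of the vertex stabilizer that you isolate). No substantive differences.
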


\begin{proof}
Define $\ell: \Map(S) \rightarrow \mathbb{Z}_{\geq 0}$ by $\ell(f)=d(f(c),c)$ for $f \in \Map(S)$, $d$ the path metric in $\Gamma$, and $c$ a fixed base point in $\Gamma$ such that the orbit of $c$ under the natural action of $\Map(S)$ has infinite diameter. We first verify that $\ell$ is a length function. Since the action of $\Map(S)$ on $\Gamma$ is isometric, it follows that $\ell(f)=\ell(f^{-1})$ and $\ell(id)=0$. Since $\Gamma$ is path connected, the value $\ell(f)$ is finite and non-negative for all $f$. The inequality $\ell(gh) \leq \ell(g) + \ell(h)$ follows from the triangle inequality in $\Gamma$. Finally, we have that $\ell$ is continuous, since for any given $f \in \Map(S)$, the preimage of $\ell(f)$ under $\ell$ is open; it consists of a union of open sets in $\Map(S)$ in its compact-open topology, namely the open sets $\{h \in \Map(S) \ | \ h(c)=g(c) \}$ for all $g$ where $\ell(g)=\ell(f)$. Since the orbit of $c$ under the natural action of $\Map(S)$ has infinite diameter, $\ell$ is unbounded.
\end{proof}

\begin{proof}[Proof \#1 of Theorem~\ref{thm:f1}]
By Lemma~\ref{lem:point-Cantor}, $\M(E)$ consists of a unique equivalence class that is either a point or a Cantor set. Since by hypothesis $S$ has no nondisplaceable compact subsurfaces, Proposition~4.8 of Mann--Rafi implies that $E(S)$ is self-similar. By Proposition~\ref{prop:self-similar}, this implies that $\Map(S)$ is CB. It follows from a theorem of Rosendal that a CB group cannot have an unbounded length function; see Section 2 of Mann--Rafi. We conclude by Proposition~\ref{prop:lengthfunction} that $S$ has no good curve graph.
\end{proof}

\section{Main theorem: second approach}

In this section we will first review the approach introduced by Durham--Fanoni--Vlamis to show that a surface has no good curve graph. We then extend their approach under the hypothesis $\f(S)=1$. We conclude with our second proof of Theorem~\ref{thm:f1}.

Durham--Fanoni--Vlamis introduced the following tool for showing that a curve graph is not good.

\begin{proposition}[\cite{DFV}, Proposition~4.1]\label{prop:criterionfd}
Let $S$ be an oriented surface and $\Gamma = \Gamma(S)$ be a connected graph consisting of curves on which the mapping class group acts. Let $\cv \subset \Gamma \times \Gamma$ satisfying:
\begin{enumerate}
\item there exists a vertex $c\in \Gamma$ such that, up to the action of $\Map(S)$, there is a finite number of pairs $(a,b)\in\cv$ with $a,b \in \Map(S)\cdot c$, and
\item for every $a,b\in \Map(S)\cdot c$ with $(a,b)\notin\cv$, there exists $d \in \Map(S)\cdot c$ such that $(a,d)$ and $(b,d)$ belong to $\cv$.
\end{enumerate}  Then every $\Map(S)$-orbit in $\Gamma$ has finite diameter.
\end{proposition}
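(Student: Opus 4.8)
\emph{Proof proposal.} The strategy is to bound the diameter of the single orbit $\Map(S)\cdot c$ directly from the two hypotheses, and then to transfer this bound to every other orbit using connectedness of $\Gamma$. Throughout, $d$ denotes the path metric on $\Gamma$. Two elementary remarks will be used repeatedly: since $\Map(S)$ acts on $\Gamma$ by simplicial automorphisms, it acts by $d$-isometries; and since $\Gamma$ is connected, $d$ takes finite values on all pairs of vertices.

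Set $O=\Map(S)\cdot c$. First I would extract a uniform constant from hypothesis (1). By (1), the pairs $(a,b)\in\cv$ with $a,b\in O$ form finitely many orbits under the diagonal $\Map(S)$-action; choosing one representative $(a_i,b_i)$ from each orbit and putting $N=\max_i d(a_i,b_i)$, the isometry property gives $d(a,b)\le N$ for \emph{every} pair $(a,b)\in\cv$ with both $a,b\in O$. Next I would use hypothesis (2) to bound an arbitrary pair $a,b\in O$. If $(a,b)\in\cv$, then $d(a,b)\le N$ by the previous sentence. Otherwise (2) supplies a vertex $w\in O$ with $(a,w),(b,w)\in\cv$; since $a,b,w$ all lie in $O$, the bound $N$ applies to each of these two pairs, so the triangle inequality gives $d(a,b)\le d(a,w)+d(w,b)\le 2N$. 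Hence $\operatorname{diam}(O)\le 2N<\infty$.

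Finally I would pass to an arbitrary orbit $\Map(S)\cdot v$. Fix a path in $\Gamma$ from $c$ to $v$ and let $L$ be its length, so $d(c,v)\le L$. For any $f\in\Map(S)$, using that $f$ is an isometry and the triangle inequality, $d(v,f(v))\le d(v,c)+d(c,f(c))+d(f(c),f(v))\le L+\operatorname{diam}(O)+L\le 2L+2N$. Thus $\Map(S)\cdot v$ has diameter at most $2L+2N$, which is finite.

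I do not anticipate a real obstacle: once the hypotheses are unpacked, the argument is just the triangle inequality applied twice. The one point that deserves care is the step that turns the \emph{orbit}-finiteness of (1) into the \emph{numerical} bound $N$ — this is exactly where the hypothesis that $\Map(S)$ acts by automorphisms (hence isometries) is essential — together with noticing that (2) keeps the bridging vertex $w$ inside the distinguished orbit $O$, which is what permits the bound $N$ to be applied to the pairs $(a,w)$ and $(b,w)$.
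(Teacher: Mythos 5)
Your proof is correct. The paper does not prove this proposition itself --- it imports it verbatim from Durham--Fanoni--Vlamis \cite[Proposition~4.1]{DFV} --- and your argument is essentially the standard one from that source: orbit-finiteness in hypothesis (1) plus the isometric action yields a uniform bound $N$ on distances between $\cv$-pairs in $\Map(S)\cdot c$, hypothesis (2) and the triangle inequality bound that orbit's diameter by $2N$, and connectedness transfers a finite bound to every other orbit. Your renaming of the bridging vertex to $w$ (avoiding a clash with the path metric $d$) and your observation that $w$ stays in the distinguished orbit are exactly the points that make the argument go through.
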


Durham--Fanoni--Vlamis use this tool to derive three easily-applied criteria for showing that a curve graph is not good. They use these criteria to show that surfaces with finite-invariance index 0 do not have good curve graphs. These three criteria are given in the following proposition.

\begin{proposition}[\cite{DFV}, Proposition~4.2]\label{prop:conditions}
Suppose that $\Gamma=\Gamma(S)$ is a connected graph consisting of curves with an action of $\Map(S)$ and:
\begin{enumerate}
\item
$S$ has infinitely many isolated punctures and $\Gamma$ contains a vertex bounding a finite-type genus-0 surface, or
\item $\genus(S)=\infty$, $S$ has either no punctures or infinitely many isolated punctures (and in the latter case has an end accumulated by both genus and punctures), and $\Gamma$ contains a curve bounding a finite-type surface, or
\item $\genus(S)=\infty$ and $\Gamma$ contains a nonseparating curve,
\end{enumerate}
then $\Map(S)$ acts on $\Gamma(S)$ with finite-diameter orbits.
\end{proposition}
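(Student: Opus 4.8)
The plan is to apply Proposition~\ref{prop:criterionfd} to a carefully chosen vertex $c$ and relation $\cv\subset\Gamma\times\Gamma$, treating the three cases in parallel. In cases (1) and (2), let $c$ be the hypothesized curve bounding a finite-type subsurface $P$ (with $P$ of genus $0$ in case (1)); for a curve $a$ in the orbit $\Map(S)\cdot c$, write $P_a$ for the subsurface that $a$ bounds which is homeomorphic to $P$, and let $\cv$ consist of all pairs $(a,b)$ of disjoint curves (allowing $a=b$). In case (3), let $c$ be the hypothesized nonseparating curve; here disjointness alone will not suffice, since two disjoint nonseparating curves can cobound a subsurface of arbitrary finite genus and hence give infinitely many $\Map(S)$-orbits of pairs, violating condition~(1) of the criterion. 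So in case (3) we instead take $\cv=\{(a,b):a\text{ and }b\text{ are disjoint and }a\cup b\text{ is nonseparating, or }a=b\}$.

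First I would identify the orbit $\Map(S)\cdot c$ and verify condition~(1) of Proposition~\ref{prop:criterionfd}. In cases (1) and (2) this orbit is the set of all curves bounding a copy of $P$, because for any two such curves $c$ and $c'$ the complements $S\sm P_c$ and $S\sm P_{c'}$ are homeomorphic: by the classification of surfaces---using the infinitude hypothesis (infinitely many isolated punctures in case~(1); infinite genus, plus infinitely many isolated punctures when $P$ has punctures, in case~(2))---removing a copy of $P$ from $S$ always yields the same surface. Given two disjoint non-isotopic curves $a,b\in\Map(S)\cdot c$, the copies $P_a$ and $P_b$ are disjoint (nesting would make $a$ and $b$ cobound an annulus, while $P_a\cup P_b=S$ is impossible as $S$ is of infinite type), so $S\sm(P_a\cup P_b)$---obtained from $S$ by deleting two disjoint copies of $P$---has a fixed homeomorphism type; change of coordinates then gives a single $\Map(S)$-orbit of such pairs, which together with the orbit of the diagonal establishes condition~(1). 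In case~(3), $\Map(S)\cdot c$ is the set of all nonseparating curves, and any $(a,b)\in\cv$ has connected complement $S\sm(a\cup b)$ of genus $\infty$ with four boundary circles and the same ends as $S$, so again there are only finitely many orbits.

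Next I would verify condition~(2). Given $a,b\in\Map(S)\cdot c$ with $(a,b)\notin\cv$, enclose $a\cup b$ in a finite-type subsurface $R\subset S$ and build $d\in\Map(S)\cdot c$ supported in $S\sm R$, so that $d$ is automatically disjoint from $a$ and $b$. In case~(1), take $d$ to bound a genus-$0$ finite-type subsurface surrounding enough of the infinitely many isolated punctures of $S$ that lie in $S\sm R$. In case~(2), take $d$ to bound a copy of $P$ inside $S\sm R$; here the hypothesis that $S$ has an end accumulated by both genus and punctures is exactly what guarantees that some component of $S\sm R$---the one containing such an end---has enough genus and enough isolated punctures to house a copy of $P$ (when $P$ is pure genus, any infinite-genus component of $S\sm R$ will do). In case~(3), take $d$ to go around a handle in an infinite-genus component of $S\sm R$ and check that $a\cup d$ and $b\cup d$ are again nonseparating. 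In all cases $d$ is disjoint from $a$ and $b$ and lies in $\Map(S)\cdot c$---in cases~(1) and (2) because $S\sm P_d\cong S\sm P_c$, and in case~(3) because $d$ is nonseparating---so $(a,d),(b,d)\in\cv$. Proposition~\ref{prop:criterionfd} then gives that every $\Map(S)$-orbit in $\Gamma$ has finite diameter.

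The step I expect to be the main obstacle is the change-of-coordinates bookkeeping behind condition~(1): one must choose $\cv$ so that it genuinely has only finitely many $\Map(S)$-orbits---the case~(3) example shows that plain disjointness can fail---and then confirm, via the classification of infinite-type surfaces, that cutting two curves of the prescribed type out of $S$ always produces the same surface. A secondary point requiring care is the placement of $d$ in case~(2), where one must check that the hypothesis on an end accumulated by both genus and punctures is precisely what makes the construction of $d$ go through.
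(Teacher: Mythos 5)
This proposition is quoted from Durham--Fanoni--Vlamis (with the authors' corrected parenthetical hypothesis in case (2)) and is not reproved in the paper, so there is no in-paper proof to compare against; your proposal correctly reconstructs the intended argument via Proposition~\ref{prop:criterionfd}, including the necessary restriction of $\cv$ to nonseparating pairs in case (3) and the role of the added hypothesis in case (2), which is exactly the failure mode the paper describes in its discussion following the statement. I see no genuine gap beyond the change-of-coordinates bookkeeping you already flag, which is at the same level of detail as the original source.
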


The statement given in (2) adds a parenthetical hypothesis to the statement given by Durham--Fanoni--Vlamis. Without some hypothesis on the positioning of the genus and punctures in the case that $S$ has infinite genus and infinitely many punctures, their proof of their Proposition~4.2(2) does not go through. For instance, take $S$ to be a surface with end space consisting of one end accumulated by genus and a second end accumulated by isolated punctures. Let $c$ be a separating curve that bounds a subsurface of genus 1 with one puncture. Then there are curves $a,b \in \Map(S) \cdot c$ such that $a$ and $b$ fill a separating finite-type subsurface $F$ where one component of $S \setminus F$ has no punctures and the other component has no genus. Because of this there is no curve $d \in \Map(S) \cdot c$ that is disjoint from both $a$ and $b$, and so Proposition~\ref{prop:criterionfd} does not apply directly. The additional hypothesis we introduce avoids this situation.

In all of the cases where Durham--Fanoni--Vlamis apply their Proposition~4.2(2), the further hypothesis in fact holds, and so the further results they prove that rely on the proposition remain true. This hypothesis also holds in all of the cases we consider in the present note.

The upshot of Proposition~4.2 is that certain curves in $S$ are ``bad", in the sense that no good curve graph for $S$ can contain such a curve. For instance, under certain hypotheses, separating curves are bad when they cut off a finite-type surface from an ``infinite pool" of genus and/or isolated punctures. In our proof of  Propositions~\ref{prop:newbadcurvesfinal} we show that, under the hypotheses of $\f(S)=1$ and self-similarity, separating curves are also bad when they cut off an infinite-type subsurface, as they leave behind in one complementary component an ``infinite pool" of subsurfaces homeomorphic to the other complementary component.

We will require a further proposition, which follows from a result of Malestein--Tao.

\begin{proposition}
\label{prop:maxselfsim}
Suppose $E(S)$ is self-similar and that $C$ is a clopen subset of $E$(S) containing a maximal end of $E(S)$. Then $C$ is homeomorphic to $E(S)$. 
\end{proposition}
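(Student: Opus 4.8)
The plan is to obtain this essentially directly from the analysis of maximal ends of self-similar end spaces carried out by Malestein--Tao, with the remaining work being the reduction to their statement and the tracking of the genus marking. The key fact we draw on is that in a self-similar end space a clopen neighborhood of a maximal end is \emph{absorbing}: it contains a clopen subset homeomorphic to the whole end space, and in fact is itself homeomorphic to it. Granting this, the proposition is nearly immediate: since $C$ is a clopen neighborhood of the maximal end of $E(S)$ that it contains, $C$ is homeomorphic to $E(S)$ --- and, tracking the genus subspace, $(C, C \cap \GE(S)) \cong (E(S), \GE(S))$.

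It is worth noting where the hypothesis enters, as self-similarity alone does not suffice. Applying the definition of self-similarity to the clopen partition $E(S) = C \sqcup (E(S) \sm C)$ (assuming $C \ne E(S)$, the other case being trivial) yields a clopen copy of $(E(S), \GE(S))$ inside one of the two pieces, but with no control over which piece; it is precisely the presence of the prescribed maximal end in $C$ that --- via the Malestein--Tao absorption property --- pins this copy into $C$. Should their result be available only in the weaker ``contains a clopen copy'' form rather than as a homeomorphism, one promotes a clopen copy $D \subseteq C$ of $E(S)$ to a homeomorphism $C \cong E(S)$ by a standard exhaustion argument: $D$ is again self-similar, so applying the absorption property inside $D$ repeatedly lets one swallow the leftover clopen set $C \sm D$.

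The points requiring care are that all of this must be carried out for the genus-marked pairs $(E(S), \GE(S))$ rather than for the bare end spaces --- so one should check that the Malestein--Tao statements apply in, or extend to, the marked setting, which is harmless since the genus subspace is carried along by every homeomorphism involved --- and, if the exhaustion step is needed, its convergence. I expect the main obstacle to be locating and citing the correct form of the Malestein--Tao absorption result and confirming it in the genus-marked context; given that, the proposition follows with only routine bookkeeping.
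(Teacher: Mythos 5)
Your proposal is correct and takes essentially the same route as the paper, which simply cites Lemma~2.8 of Malestein--Tao (all clopen neighborhoods of a maximal end in a self-similar end space are homeomorphic) and applies it to $E(S)$ and $C$, both clopen neighborhoods of the given maximal end. Your contingency material (the exhaustion argument and the genus-marked bookkeeping) is unnecessary here, since the cited lemma is already in the strong form and the proposition as stated concerns only the underlying end space.
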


\begin{proof}
Let $x \in C \subseteq E$ be a maximal end of $E$. By Lemma~2.8 of Malestein--Tao, every clopen neighborhood of a maximal end in a self-similar end space is homeomorphic \cite{JJ}. Since $E$ and $C$ are clopen neighborhoods of $x$, they are homeomorphic.
\end{proof}

A noteworthy consequence of this proposition is that the following definition of self-similarity is equivalent to the Mann--Rafi definition of self-similarity: a pair $(E,E^g)$ is self-similar if for any decomposition $E=E_1 \sqcup E_2 \sqcup \dots \sqcup E_n$ of $E$ into pairwise disjoint clopen sets, for some $E_i$ the pair $(E_i, E_i \cap E^g)$ is homeomorphic to $(E, E^g)$. This is because some $E_i$ must contain a maximal end.

For a surface $S$ with a self-similar end space $E$, we call two distinct disjoint separating curves unnested if the end space of the subsurface they cobound contains an end in $\mathcal M(E)$.  Otherwise, a pair of distinct disjoint separating curves is nested.

\begin{proposition}
\label{prop:newbadcurvesfinal}
Suppose $S$ is of infinite type with $\f(S)=1$ and that $E(S)$ is self-similar. Suppose further that that $\Gamma$ is a connected curve graph for $S$. Then $\Map(S)$ acts on $\Gamma$ with finite-diameter orbits.
\end{proposition}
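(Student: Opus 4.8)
The plan is to invoke the Durham--Fanoni--Vlamis criterion, Proposition~\ref{prop:criterionfd}, with a carefully chosen base curve $c$ and relation $\cv$. Since $\f(S)=1$, Lemma~\ref{lem:point-Cantor} tells us $\M(E)$ is a single equivalence class that is a point or a Cantor set, and we are assuming $E(S)$ is self-similar, so Proposition~\ref{prop:maxselfsim} applies: any clopen subset of $E(S)$ containing a maximal end is homeomorphic to $E(S)$. First I would dispose of the ``easy'' configurations using Proposition~\ref{prop:conditions}: if $\Gamma$ contains a nonseparating curve and $\genus(S)=\infty$, or a curve bounding a finite-type subsurface in the relevant genus/puncture cases, then orbits are already finite-diameter. (One needs to check the parenthetical hypothesis added to Proposition~4.2(2) holds here, which it does since self-similarity of $E(S)$ forces any end accumulated by genus to be accumulated by punctures too, when punctures are present.) So the remaining case is that every vertex of $\Gamma$ is a separating curve cutting off an \emph{infinite-type} subsurface on each side.

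Next I would set up $\cv$. Given the reduction, fix any vertex $c$ of $\Gamma$; it separates $S$ into two infinite-type pieces. Because $E(S)$ is self-similar with a unique maximal class, exactly one side's end set contains a maximal end and hence (by Proposition~\ref{prop:maxselfsim}) is homeomorphic to $E(S)$; call the curve $c$ itself the witness. The key geometric input is the nested/unnested dichotomy defined just before the proposition: declare $(a,b) \in \cv$ when $a,b \in \Map(S)\cdot c$ are disjoint \emph{nested} separating curves (the subsurface they cobound misses the maximal class), together with all pairs where $a=b$. I would then verify condition~(1): up to the $\Map(S)$-action there are only finitely many such nested configurations of a pair of curves in the orbit of $c$ — intuitively, the homeomorphism type of the ``sandwiched'' region together with the positions of $a$ and $b$ inside a copy of $S$ is controlled, and the change-of-coordinates principle for infinite-type surfaces (the Mann--Rafi / Aramayona--Valdez style classification of curves by the topology of complementary pieces) gives finiteness.

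For condition~(2) — the substantive step — given $a,b \in \Map(S)\cdot c$ with $(a,b) \notin \cv$, i.e.\ either they are not disjoint or they are unnested, I need to produce $d \in \Map(S)\cdot c$ with $(a,d),(b,d) \in \cv$, i.e.\ a curve in the orbit of $c$ that is disjoint from and nested with both $a$ and $b$. The idea is: $a \cup b$ is contained in some finite-type subsurface $F$; its complement $S \sm F$ has an infinite-type component whose end set contains the maximal class (using self-similarity — this is exactly the ``infinite pool of subsurfaces homeomorphic to the other complementary component'' mentioned in the text). Inside that component I can find a separating curve $d$ that cobounds with each of $a,b$ a region missing the maximal class; Proposition~\ref{prop:maxselfsim} guarantees that the piece of $S\sm d$ containing the maximal class is again homeomorphic to $S$, which lets me certify $d \in \Map(S)\cdot c$ by the change-of-coordinates principle (the two sides of $d$ are homeomorphic to the two sides of $c$). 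Then $(a,d)$ and $(b,d)$ are disjoint nested pairs, hence in $\cv$.

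The main obstacle I anticipate is precisely this last construction of $d$ together with the proof that $d$ lies in $\Map(S)\cdot c$: one must argue that the complementary pieces cut off by $d$ have exactly the homeomorphism type of those cut off by $c$, which requires knowing that both the maximal-class side and the ``small'' side can be arranged to match $c$'s sides — the maximal side via Proposition~\ref{prop:maxselfsim}, but the small side requires that self-similarity lets us realize a nested copy of $c$'s small side disjointly from $a\cup b$. Handling the Cantor-set case versus the single-point case of $\M(E)$ may need slightly different bookkeeping (in the point case the maximal end is isolated in a weak sense and one must be careful that ``missing the maximal class'' is a clopen condition), and verifying that the added parenthetical hypothesis in Proposition~\ref{prop:conditions}(2) is automatic under $\f(S)=1$ and self-similarity is a small but necessary lemma. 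Once $d$ is built and placed in the orbit, conditions~(1) and~(2) are in hand and Proposition~\ref{prop:criterionfd} finishes the proof.
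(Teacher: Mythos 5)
Your overall strategy matches the paper's: dispose of nonseparating curves and curves cutting off finite-type pieces via Proposition~\ref{prop:conditions}, then handle separating curves with two infinite-type complementary components by applying Proposition~\ref{prop:criterionfd} to a relation $\cv$ built from the nested/unnested dichotomy, with Proposition~\ref{prop:maxselfsim} controlling homeomorphism types. But you have inverted the key definition, and the inversion is fatal. The paper takes $\cv$ to be the set of \emph{unnested} pairs---those whose cobounded subsurface \emph{contains} a maximal end---whereas you take $\cv$ to be the nested pairs (plus the diagonal). With the paper's choice, condition (1) holds because the cobounded region of an unnested pair contains a maximal end and is therefore homeomorphic to $E(S)$ by Proposition~\ref{prop:maxselfsim}, while the two outer regions are copies of the small side of $c$; the classification of surfaces then yields a single orbit of such pairs. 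With your choice, condition (1) fails: the region sandwiched between two nested curves avoids the maximal class, and its homeomorphism type is \emph{not} controlled. Concretely, take $S$ of genus zero with end space $\omega^2+1$ (which has $\f(S)=1$ and self-similar end space) and let $c$ cut off a piece with end space $\omega+1$. Two nested curves in $\Map(S)\cdot c$ can cobound a region containing any number $k\geq 0$ of isolated punctures, and distinct $k$ give distinct $\Map(S)$-orbits of pairs, so your $\cv$ meets the orbit of $c$ in infinitely many orbits of pairs.

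Condition (2) also breaks for your $\cv$. Given unnested $a,b$, you need $d\in\Map(S)\cdot c$ nested with both; this forces $d$ to separate the maximal class from $a\cup b$, so the side of $d$ not containing the maximal class must swallow $a$, $b$, and both of their small sides. In the example above that side then has at least two limit points of isolated punctures in its end space and cannot be homeomorphic to $\omega+1$, so $d\notin\Map(S)\cdot c$. The repair is exactly the paper's convention: put the \emph{unnested} pairs in $\cv$, note they form a single orbit, and for $a,b$ that intersect, are nested, or coincide, place a copy $d$ of $c$ deep inside the complementary component whose end space contains a maximal end (hence is homeomorphic to $E(S)$, so such a copy exists), arranged so that the regions cobounded by $d$ with $a$ and with $b$ each still contain a maximal end. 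Separately, your justification of the parenthetical hypothesis in Proposition~\ref{prop:conditions}(2) is not quite right: the paper's argument is that when $S$ has infinite genus and infinitely many isolated punctures, every \emph{maximal} end is accumulated by both genus and isolated punctures by maximality, not that every end accumulated by genus is accumulated by punctures.
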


\begin{proof}
Let $S$ and $\Gamma$ be as in the statement. Let $c$ be a curve in $\Gamma$. Since $\f(S)=1$, we have $\g(S)=0$ or $\infty$. If $c$ is a nonseparating curve, then $\g(S)=\infty$, Proposition \ref{prop:conditions}(3) applies, and $\Map(S)$ acts on $\Gamma$ with finite-diameter orbits.

If $c$ is separating and cuts off a finite-type surface, then $\Map(S)$ acts on $\Gamma$ with finite-diameter orbits since either Proposition \ref{prop:conditions}(1) or (2) applies, as follows. Since we already have that $\g(S)=0$ or $\infty$, we next show that $S$ has either 0 or infinitely many isolated punctures. The surface $S$ cannot have two or more (but finitely-many) isolated punctures, since these would be maximal, contrary to the condition on $\M(E)$ granted by Lemma~\ref{lem:point-Cantor}. If there is a single isolated puncture, then it must be maximal and so equal to $\M(E)$ and in fact all of $E$. But then $S$ is the plane and so not of infinite type. Finally, if $S$ has infinite genus and infinitely many isolated punctures, then every maximal end is accumulated both by genus and by isolated punctures by maximality. We conclude that the proposition applies.

Otherwise, $c$ is separating but does not cut off a finite-type subsurface.  Set \[ \mathcal V = \{(a, b)~|~ a \text{ and } b \text{ are unnested}\}.\]

\noindent There is a unique pair of separating curves $(a,b) \in \cv$ with $a, b \in \Map(S) \cdot c$, up to the action of $\Map(S)$. This follows from  Proposition~\ref{prop:maxselfsim} and the classification of surfaces. Therefore condition (1) of Proposition \ref{prop:criterionfd} is satisfied.

Next, consider $a, b \in \Map(S) \cdot c$ such that $(a, b) \not \in \cv$. This means that either $a$ and $b$ intersect, or they are nested, or they are not distinct. In each case we will find $d \in \Map(S) \cdot c$ such that $(a,d)$ and $(b,d)$ belong to $\cv$. If $a$ and $b$ intersect, then together they fill a finite-type subsurface $F$. By Proposition~\ref{prop:maxselfsim}, one of the components of $S \setminus F$ has end space homeomorphic to $E(S)$ since its end space contains a maximal end. Then there exists a separating curve $d$ in this component that is in $\Map(S) \cdot c$ and unnested with each of $a$ and $b$. Therefore, $(a,d), (b,d) \in \cv$. Similarly, if $a$ and $b$ are nested, there is again a component of $S \setminus \{a \cup b\}$ with end space homeomorphic to $E(S)$ where there exists the desired curve $d$ that is unnested with each of $a$ and $b$. A similar argument applies if $a$ and $b$ are not distinct. Condition (2) of Proposition \ref{prop:criterionfd} is therefore satisfied, and $\Map(S)$ acts on $\Gamma$ with finite-diameter orbits.

Since this exhausts the possibilities for $c$, we conclude that $\Map(S)$ acts on $\Gamma$ with finite-diameter orbits and that $S$ has no good curve graph.\end{proof}

\begin{proof}[Proof \#2 of Theorem \ref{thm:f1}]
By Lemma~\ref{lem:point-Cantor}, $\M(E)$ consists of a unique equivalence class that is either a point or a Cantor set. Since by hypothesis $S$ has no nondisplaceable compact subsurfaces, Proposition~4.8 of Mann--Rafi implies that $E(S)$ is self-similar. By applying Proposition \ref{prop:newbadcurvesfinal}, we conclude that $S$ has no good curve graph. \end{proof}

\bibliographystyle{plain}
\bibliography{paper}

\begin{thebibliography}{10}

\bibitem{JaviEmail}
Javier Aramayona.
\newblock {Private communication}, 2018.

\bibitem{AV}
Javier Aramayona and Ferr\'{a}n Valdez.
\newblock On the geometry of graphs associated to infinite-type surfaces.
\newblock {\em Mathematische Zeitschrift}, 2016.

\bibitem{overview}
Javier Aramayona and Nicholas~G. Vlamis.
\newblock Big mapping class groups: an overview.
\newblock \texttt{arXiv:2003.07950}, 2020.

\bibitem{Bavard}
Juliette Bavard.
\newblock Hyperbolicit\'{e} du graphe des rayons et quasi-morphismes sur un
  gros groupe modulaire.
\newblock {\em Geom. Topol.}, 20(1):491--535, 2016.

\bibitem{Calegari}
Danny Calegari.
\newblock “{B}ig mapping class groups and dynamics”, {G}eometry and the
  imagination.
\newblock \\
  \texttt{https://lamington.wordpress.com/2009/06/22/big-mapping-class-groups-and-dynamics/},
  2009.

\bibitem{DFV}
Matthew~Gentry Durham, Federica Fanoni, and Nicholas~G. Vlamis.
\newblock Graphs of curves on infinite-type surfaces with mapping class group
  actions.
\newblock {\em Ann. Inst. Fourier (Grenoble)}, 68(6):2581--2612, 2018.

\bibitem{GFM2021}
Federica Fanoni, Tyrone Ghaswala, and Alan McLeay.
\newblock Homeomorphic subsurfaces and the omnipresent arcs.
\newblock \texttt{arXiv:2003.04750}, 2021.

\bibitem{Harvey}
W.~J. Harvey.
\newblock Boundary structure of the modular group.
\newblock In {\em Riemann surfaces and related topics: {P}roceedings of the
  1978 {S}tony {B}rook {C}onference ({S}tate {U}niv. {N}ew {Y}ork, {S}tony
  {B}rook, {N}.{Y}., 1978)}, volume~97 of {\em Ann. of Math. Stud.}, pages
  245--251. Princeton Univ. Press, Princeton, N.J., 1981.

\bibitem{Ker}
B\'{e}la Ker\'{e}kj\'{a}rt\'{o}.
\newblock {\em Vorlesungen über Topologie. I}.
\newblock Springer, Berlin, 1923.

\bibitem{JJ}
Justin Malestein and Jing Tao.
\newblock Self-similar surfaces: Involutions and perfection.
\newblock \texttt{arXiv:2106.03681}, 2021.

\bibitem{MR}
Kathryn Mann and Kasra Rafi.
\newblock Large scale geometry of big mapping class groups.
\newblock \texttt{arXiv:1912.10914}, 2019.

\bibitem{MM}
Howard~A. Masur and Yair~N. Minsky.
\newblock Geometry of the complex of curves. {I}. {H}yperbolicity.
\newblock {\em Invent. Math.}, 138(1):103--149, 1999.

\bibitem{Richards}
Ian Richards.
\newblock On the classification of noncompact surfaces.
\newblock {\em Trans. Amer. Math. Soc.}, 106:259--269, 1963.

\bibitem{Rosendal}
Christian Rosendal.
\newblock Large scale geometry of metrisable groups.
\newblock \texttt{arXiv:1403.3106}, 2014.

\end{thebibliography}
\end{document}